\theoremstyle{plain}
 \newtheorem{theorem}{Theorem}[section]
 \newtheorem{prop}{Proposition}[section]
 \newtheorem{lem}{Lemma}[section]
\theoremstyle{Definition}
 \newtheorem{exm}{Example}[section]
 \newtheorem{dfn}{Definition}[section]
\theoremstyle{remark}
 \numberwithin{equation}{section}
\renewcommand{\leq}{\leqslant}
\renewcommand{\geq}{\geqslant}
\title[A Short Comparison of Classical Complex Dynamics and Holomorphic...]{A Short Comparison  of Classical Complex Dynamics and Holomorphic Semigroup Dynamics}
\subjclass[2010]{37F10, 30D05}
\keywords{Holomorphic semigroup, Fatou set, Julia set, escaping set.}
\author[B. H. Subedi]{\bfseries  Bishnu Hari Subedi}
\address{ 
Central Department of Mathematics \\ 
Institute of Science and Technology   \\ 
Tribhuvan University   \\ 
Kirtipur, Kathmandu\\
Nepal}
\email{subedi.abs@gmail.com / subedi\_bh@cdmathtu.edu.np }
\author[A. Singh]{Ajaya Singh}
\address{Central Department of Mathematics, Institute of Science and Technology, Tribhuvan University, Kirtipur, Kathmandu, Nepal }
\email{singh.ajaya1@gmail.com / singh\_a@cdmathtu.edu.np} 
\thanks{This research work of the first author is supported by PhD faculty fellowship from University Grants Commission, Nepal. } 
\begin{document}

{\begin{flushleft}\baselineskip9pt\scriptsize
\end{flushleft}}
\vspace{18mm} \setcounter{page}{1} \thispagestyle{empty}

\begin{abstract}
This is an expository plus research paper which mainly exposes preliminary connection and contrast between classical complex dynamics and semigroup dynamics of holomorphic functions. Classically, we expose some existing results of rational and transcendental dynamics and we see how far these results generalized to holomorphic semigroup dynamics as well as we also see what new phenomena occur. 
\end{abstract}

\maketitle

\section{Introduction}
It is quite natural to extend the Fatou-Julia-Eremenko  theory of the iteration of single holomorphic map in complex plane $ \mathbb{C} $ or extended complex plane $ \mathbb{C}_{\infty} $ to composite of the family of holomorphic maps. So, the purpose of this paper is to expose the theory of complex dynamics not only for the iteration of single holomorphic map on $ \mathbb{C} $ or $ \mathbb{C_{\infty}} $ but also for the composite of the family $\mathscr{F}  $ of such maps. Let $ \mathscr{F} $ be a space of holomorphic maps on $ \mathbb{C} $ or $ \mathbb{C}_{\infty} $. For any map $ \phi \in \mathscr{F} $,  $ \mathbb{C} $ or $ \mathbb{C}_{\infty} $ is naturally partitioned  into two subsets: the set of normality and its complement. We say that a family $ \mathscr{F} $ is normal if each sequence from the family has a subsequence which either converges uniformly on compact subsets  or diverges uniformly to $ \infty $. The set of normality or Fatou set $ F(\phi) $ of the map $ \phi \in \mathscr{F} $ is the largest open set on which the iterates $ \phi^{n} = \phi \circ \phi \circ \ldots \circ \phi$ (n-fold composition of $ \phi $ with itself) is a normal family.  The complement $ J(\phi) $ is the Julia set. A maximally connected subset of the Fatou set $ F(\phi) $ is a Fatou component.  The main concern of such an iteration theory is to describe the nature of the components of Fatou set and the structure and properties of the Julia set.   
 
 In our study, classical complex dynamics  refers  the iteration theory of single holomorphic map and holomorphic semigroup dynamics refers the dynamical theory generated by various classes of holomorphic maps. In holomorphic semigroup dynamics, algebraic structure of semigroup naturally attached to the dynamics and hence the situation is largely complicated. The principal aim  of this paper is to see how far classical complex dynamics applies to holomorphic semigroup dynamics and what new phenomena appear in holomorphic semigroup settings. 
 
\section{The notion of holomorphic semigroup}
Semigroup $ S $ is a very classical algebraic structure with a binary composition that satisfies associative law. It naturally arose from the general mapping of a set into itself. So a set of holomorphic maps on $ \mathbb{C} $ or $ \mathbb{C}_{\infty} $ naturally forms a semigroup. Here, we take a set $ A $ of holomorphic maps and construct a semigroup $ S $ consists of all elements that can be expressed as a finite composition of elements in $ A $. We say such a semigroup $ S $ by \textit{holomorphic semigroup} generated by set $ A $.  
For our simplicity, we denote the class of all rational maps on $ \mathbb{C_{\infty}} $ by $ \mathscr{R} $ and class of all transcendental entire maps on $ \mathbb{C} $ by $ \mathscr{E} $. 
Our particular interest is to study of the dynamics of the families of above two classes of holomorphic maps.  For a collection $\mathscr{F} = \{f_{\alpha}\}_{\alpha \in \Delta} $ of such maps, let 
$$
S =\langle f_{\alpha} \rangle
$$ 
be a \textit{holomorphic semigroup} generated by them. Here $ \mathscr{F} $ is either a collection $ \mathscr{R} $ of rational maps or a collection $ \mathscr{E} $ of transcendental entire maps. The index set $ \Delta $ to which $ \alpha $  belongs is allowed to be infinite in general unless otherwise stated. 
Here, each $f \in S$ is a holomorphic function and $S$ is closed under functional composition. Thus, $f \in S$ is constructed through the composition of finite number of functions $f_{\alpha_k},\;  (k=1, 2, 3,\ldots, m) $. That is, $f =f_{\alpha_1}\circ f_{\alpha_2}\circ f_{\alpha_3}\circ \cdots\circ f_{\alpha_m}$. In particular,  if $ f_{\alpha} \in \mathscr{R} $, we say $ S =\langle f_{\alpha} \rangle$ a \textit{rational semigroup} and if  $ f_{\alpha} \in \mathscr{E} $, we say $ S =\langle f_{\alpha} \rangle$ a \textit{transcendental semigroup}. 

A semigroup generated by finitely many holomorphic functions $f_{i}, (i = 1, 2, \ldots, \\ n) $  is called \textit{finitely generated  holomorphic semigroup}. We write $S= \langle f_{1},f_{2},\ldots,f_{n} \rangle$.
 If $S$ is generated by only one holomorphic function $f$, then $S$ is \textit{cyclic semigroup}. We write $S = \langle f\rangle$. In this case, each $g \in S$ can be written as $g = f^n$, where $f^n$ is the nth iterates of $f$ with itself. Note that in our study of  semigroup dynamics, we say $S = \langle f\rangle$  a \textit{trivial semigroup}. 
  
The following result will be clear from the definition of holomorphic semigroup. It shows that every element of holomorphic semigroup can be written as finite composition of the sequence of $f_{\alpha} $
\begin{prop}\label{ts1}
 Let $S   = \langle f_{\alpha} \rangle$ be an arbitrary  holomorphic semigroup. Then for every $ f \in S $,  $f^{m} $(for all $ m \in \mathbb{N}$) can be written as $f^{m} =f_{\alpha_1}\circ f_{\alpha_2}\circ f_{\alpha_3}\circ \cdots\circ f_{\alpha_p}$ for some $ p\in \mathbb{N} $.
 \end{prop}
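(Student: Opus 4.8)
The plan is to prove this directly by unwinding the definition of the holomorphic semigroup generated by $\{f_{\alpha}\}$, since the statement is essentially a bookkeeping consequence of that definition together with closure under composition. First I would record the defining property established in the preceding discussion: any element $f \in S$ is, by construction, a finite composition of generators, so there exist indices $\beta_1, \beta_2, \ldots, \beta_k$ (for some $k \in \mathbb{N}$) with $f = f_{\beta_1} \circ f_{\beta_2} \circ \cdots \circ f_{\beta_k}$. The goal is then to show that the $m$-fold self-composition $f^{m}$ inherits the same form.

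I would carry this out by induction on $m$. The base case $m = 1$ is exactly the defining property of membership in $S$ recalled above, with $p = k$ and $\alpha_j = \beta_j$. For the inductive step, I would assume $f^{m} = f_{\alpha_1} \circ f_{\alpha_2} \circ \cdots \circ f_{\alpha_q}$ for some $q \in \mathbb{N}$ and then write $f^{m+1} = f^{m} \circ f = \bigl(f_{\alpha_1} \circ \cdots \circ f_{\alpha_q}\bigr) \circ \bigl(f_{\beta_1} \circ \cdots \circ f_{\beta_k}\bigr)$. Concatenating these two finite strings of generators and relabeling the combined list as $f_{\alpha_1}, \ldots, f_{\alpha_p}$ with $p = q + k$ yields the desired expression for $f^{m+1}$, completing the induction. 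Equivalently, one may argue non-inductively by substituting the expression $f = f_{\beta_1} \circ \cdots \circ f_{\beta_k}$ into each of the $m$ factors of $f^{m} = f \circ f \circ \cdots \circ f$ and observing that the result is a composition of $p = mk$ generators; associativity of functional composition guarantees that the grouping is irrelevant.

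The key structural fact underlying both approaches is that $S$ is a semigroup, hence closed under composition, so that $f^{m} \in S$ for every $m \in \mathbb{N}$; every element of $S$ is by definition a finite composition of the $f_{\alpha}$, and $f^{m}$ is no exception. I do not expect any genuine obstacle here, as the result is a direct consequence of the definition rather than a substantive theorem; the only point requiring mild care is the index management when concatenating generator strings, which is purely notational and is handled cleanly by the relabeling in the inductive step.
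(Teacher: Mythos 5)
Your proposal is correct and follows exactly the route the paper intends: the paper offers no written proof, stating only that the result ``will be clear from the definition of holomorphic semigroup,'' and your argument simply spells out that definitional unwinding (expressing $f$ as a finite composition of generators and concatenating $m$ copies, with the induction handling the bookkeeping). Nothing is missing; you have merely made explicit what the paper leaves implicit.
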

This proposition \ref{ts1} tells us that  it need not to apply same map over and over again in holomorphic semigroup dynamics. But instead, we may start with family of maps and we consider dynamics over iteratively defined composition  of sequence of maps. 

Next, we define and discuss some special collection and sequences of holomorphic functions.
Note that all notions of convergence that we deal in this paper will be with respect to the Euclidean metric on the complex plane $ \mathbb{C} $ or spherical metric on the Riemann sphere $ \mathbb{C}_{\infty} $.

The family $\mathscr{F}$  of complex analytic maps forms a \textit{normal family} in a domain $ D $ if given any composition sequence $ (f_{\alpha}) $ generated by the member of  $ \mathscr{F} $,  there is a subsequence $( f_{\alpha_{k}}) $ which is uniformly convergent or divergent on all compact subsets of $D$. If there is a neighborhood $ U $ of the point $ z\in\mathbb{C} $ such that $\mathscr{F} $ is normal family in $U$, then we say $ \mathscr{F} $ is normal at $ z $. If  $\mathscr{F}$ is a family of members from the semigroup $ S $, then we simply say that $ S $ is normal in the neighborhood of $ z $ or $ S $ is normal at $ z $.

Let  $ f $ be a holomorphic map. We say that  $ f $ \textit{iteratively divergent} at $ z \in \mathbb{C} $ if $  f^n(z)\rightarrow \alpha \; \textrm{as} \; n \rightarrow \infty$, where $ \alpha $  is an essential singularity of $ f $. A sequence $ (f_{k})_{k \in \mathbb{N}} $ of holomorphic maps is said to be \textit{iteratively divergent} at $ z $ if $ f_{k}^{n}(z) \to\alpha_{k} \;\ \text{as}\;\ n\to \infty$ for all $ k \in \mathbb{N} $, where $ \alpha_{k} $  is an essential singularity of $ f_{k} $ for each $ k $.  Semigroup $ S $ is \textit{iteratively divergent} at $ z $ if $f^n(z)\rightarrow \alpha_{f} \; \textrm{as} \; n \rightarrow \infty$, where $ \alpha_{f} $  is an essential singularity of each $ f \in S $. Otherwise, a function $ f  $, sequence $ (f_{k})_{k \in \mathbb{N}} $ and semigroup $ S $  are said to be \textit{iteratively bounded} at $ z $. 

\section{Fatou set, Julia set and Escaping set}
In classical complex dynamics, each of Fatou set, Julia set and escaping set are defined in two different but equivalent ways.  In first definition, Fatou set is defined as the set of normality of the iterates of given function, Julia set is defined as the complement of the Fatou set and escaping set is defined as the set of points that goes to essential singularity under the iterates of given function. The second definition of  Fatou set is given as a largest completely invariant open set and Julia set is given as a smallest completely invariant close set  whereas escaping set is a completely invariant non-empty neither open nor close set in $\mathbb{C} $. 
Each of these definitions can be naturally extended to the settings of holomorphic semigroup $ S $ but extension definitions are not equivalent.  
Based on above first definition (that is,  on the Fatou-Julia-Eremenko theory of a complex analytic function), the Fatou set, Julia set and escaping set in the settings of holomorphic semigroup are defined as follows.
\begin{dfn}[\textbf{Fatou set, Julia set and escaping set}]\label{2ab} 
\textit{Fatou set} of the holomorphic semigroup $S$ is defined by
  $$
  F (S) = \{z \in \mathbb{C}: S\;\ \textrm{is normal in a neighborhood of}\;\ z\}
  $$
and the \textit{Julia set} $J(S) $ of $S$ is the compliment of $ F(S) $. If $ S $ is a transcendental semigroup, the \textit{escaping set} of $S$ is defined by 
$$
I(S)  = \{z \in \mathbb{C}: S \;  \text{is iteratively divergent at} \;z \}
$$
We call each point of the set $  I(S) $ by \textit{escaping point}.        
\end{dfn} 
It is obvious that $F(S)$ is the largest open subset (of $\mathbb{C}$ or $ \mathbb{C}_{\infty} $) on which the family $\mathscr{F} $ in $S$ (or semigroup $ S $ itself) is normal. Hence its compliment $J(S)$ is a smallest closed set for any  semigroup $S$. Whereas the escaping set $ I(S) $ is neither an open nor a closed set (if it is non-empty) for any semigroup $S$. Any maximally connected subset $ U $ of the Fatou set $ F(S) $ is called a \textit{Fatou component}.  
        
If $S = \langle f\rangle$, then $F(S), J(S)$ and $I(S)$ are respectively the Fatou set, Julia set and escaping set in classical complex dynamics. In this situation we simply write: $F(f), J(f)$ and $I(f)$. 

The main motivation of this paper comes from seminal work of Hinkkanen and Martin \cite{hin} on the dynamics of rational semigroup and the extension study of K. K. Poon \cite{poo} to the dynamics of transcendental semigroup. Both of them naturally generalized classical complex dynamics to the dynamics of the sequence of different functions by means of composition. Another motivation of studying escaping set of transcendental semigroup comes from the work of Dinesh Kumar and Sanjay Kumar\cite{kum2, kum1} where they defined escaping set and discussed how far escaping set of classical transcendental dynamics can be generalized to semigroup dynamics. 

The fundamental contrast between classical complex dynamics and semigroup dynamics appears by different algebraic structure of corresponding semigroups. In fact, non-trivial semigroup (rational or transcendental) need not be, and most often will not be abelian. However, trivial semigroup is cyclic and therefore abelian. As we discussed before, classical complex dynamics is a dynamical study of trivial (cyclic)  semigroup whereas semigroup dynamics is a dynamical study of non-trivial holomorphic semigroup. 

The following immediate result holds good from definition \ref{2ab} of escaping set.
\begin{theorem}\label{1df}
Let $ S $ be a transcendental semigroup and let $ z \in \mathbb{C} $ is an escaping point under $ S $. Then every sequence $ (g_{k})_{k \in \mathbb{N}} $ in $ S $ is iteratively divergent at $ z $  and  every subsequence of $ (g_{k})_{k \in \mathbb{N}} $ is also iteratively divergent at $ z $.
\end{theorem}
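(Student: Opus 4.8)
The plan is to derive the statement directly from Definition \ref{2ab} of the escaping set, unravelling what it means for $z$ to be an escaping point. By hypothesis $z \in I(S)$, so $S$ is iteratively divergent at $z$; by the definition of iterative divergence of a semigroup this says that for \emph{every} $f \in S$ we have $f^n(z) \to \alpha_f$ as $n \to \infty$, where $\alpha_f$ is the essential singularity of $f$ (which, for a transcendental entire map, is the point $\infty$). The central observation is that the escaping condition is a \emph{universal} statement quantified over all members of $S$, so it can be specialized to any particular element we choose.

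First I would fix an arbitrary sequence $(g_{k})_{k \in \mathbb{N}}$ in $S$. For each fixed index $k$ the map $g_{k}$ is a genuine element of $S$, so applying the escaping hypothesis with $f = g_{k}$ yields $g_{k}^{n}(z) \to \alpha_{g_{k}}$ as $n \to \infty$, where $\alpha_{g_{k}}$ is the essential singularity of $g_{k}$. Since this holds for every $k \in \mathbb{N}$, this is precisely the definition of the sequence $(g_{k})_{k \in \mathbb{N}}$ being iteratively divergent at $z$.

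Next I would pass to subsequences. A subsequence $(g_{k_{j}})_{j \in \mathbb{N}}$ is again a sequence whose terms all lie in $S$; hence the very same specialization of the escaping hypothesis (now to $f = g_{k_{j}}$ for each $j$) gives $g_{k_{j}}^{n}(z) \to \alpha_{g_{k_{j}}}$ as $n \to \infty$ for every $j$, which is exactly the iterative divergence of the subsequence.

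There is no serious obstacle here: the result is essentially a bookkeeping exercise that rephrases the membership $z \in I(S)$ in the language of sequences. The only point requiring any care is to recognize that iterative divergence of a sequence $(g_{k})_{k \in \mathbb{N}}$ is a purely term-by-term condition, namely that each $g_{k}$ individually iterates to its own essential singularity, rather than a condition on compositions or on the joint behaviour of the whole family. Once this is noted, the escaping hypothesis supplies the divergence of each term for free, and the passage to subsequences is immediate because a subsequence introduces no new functions beyond those already in $S$.
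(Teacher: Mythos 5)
Your proof is correct and matches the paper's treatment: the paper offers no separate argument, stating only that the result ``holds good from Definition \ref{2ab}'', and your unravelling of the definition --- specializing the universal quantifier over $f \in S$ to each term $g_k$, then noting a subsequence is again a sequence in $S$ --- is exactly the bookkeeping that justification presupposes.
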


The following characterization of escaping set will be clear from the definition \ref{2ab} of escaping set and proposition \ref{ts1}, which can be an alternative definition.

\begin{theorem}\label{ad1}
If a complex number $ z \in \mathbb{C} $ is escaping point of any transcendental semigroup $ S $, then  every sequence in $ S $ has a subsequence which diverges to $ \infty $ at $ z $.
\end{theorem}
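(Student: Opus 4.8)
The plan is to read the statement through Definition \ref{2ab} and to derive the asserted divergence using Theorem \ref{1df} to propagate iterative divergence to an arbitrary sequence, together with Proposition \ref{ts1} to keep the relevant iterates inside $S$. First I would record the basic observation that, since $S$ is a \emph{transcendental} semigroup, every $f \in S$ is a (finite composition of) transcendental entire maps, so its only essential singularity is the point $\infty$. Consequently the phrase ``iteratively divergent at $z$'' from Definition \ref{2ab} simply means $f^n(z) \to \infty$ as $n \to \infty$ for each $f \in S$, which reduces the whole problem to tracking escape toward $\infty$ in the spherical metric.

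I would then fix $z \in I(S)$ and an arbitrary sequence $(g_k)_{k \in \mathbb{N}}$ drawn from $S$. By Theorem \ref{1df}, this sequence, and indeed each of its subsequences, is iteratively divergent at $z$, so for every index $k$ we have $g_k^n(z) \to \infty$ as $n \to \infty$. The key step is to convert this iterative divergence into genuine divergence of the values of semigroup elements: for each $k$ I would choose $n_k \in \mathbb{N}$ large enough that $|g_k^{n_k}(z)| > k$, which is possible precisely because $g_k^n(z) \to \infty$. By Proposition \ref{ts1} every iterate $g_k^{n_k}$ is again a finite composition of generators, hence $h_k := g_k^{n_k} \in S$; thus $(h_k)_{k \in \mathbb{N}}$ is a sequence in $S$ with $h_k(z) \to \infty$, which furnishes the required subsequence diverging to $\infty$ at $z$.

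The main obstacle I anticipate is bookkeeping rather than deep analysis: one must state precisely in what sense the extracted maps constitute a ``subsequence,'' since the divergent family $(h_k)$ consists of iterates of the original terms and not of the terms $g_k$ themselves. Making this rigorous amounts to fixing the convention, used implicitly throughout the paper via the notion of a composition sequence, that the objects we extract from $S$ may be these higher iterates, each of which lies in $S$ by Proposition \ref{ts1}. Once that convention is pinned down, the divergence $h_k(z) \to \infty$ is immediate from the choice of the $n_k$, and the spherical-metric meaning of divergence to $\infty$ closes the argument with no further estimates required.
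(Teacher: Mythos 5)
Your argument breaks down at precisely the step you yourself flag as ``bookkeeping'': it is not bookkeeping, it is the entire content of the theorem. From the given sequence $(g_k)$ you manufacture the maps $h_k = g_k^{n_k}$, and indeed $h_k \in S$ by Proposition \ref{ts1} and $h_k(z)\to\infty$ by your choice of $n_k$. But $(h_k)$ is not a subsequence of $(g_k)$: a subsequence consists of the original terms $g_{k_1}, g_{k_2}, \ldots$ with $k_1 < k_2 < \cdots$, evaluated as they stand, and the theorem (as it is actually invoked later in the paper, e.g.\ in the proof of Theorem \ref{lu1}) requires $g_{k_j}(z)\to\infty$ for such terms. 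Once you permit yourself to replace each term by a high iterate of itself, the statement you are proving becomes vacuous: you could discard $(g_k)$ altogether and take $h_k = f^k$ for any single fixed $f\in S$, which diverges at $z$ directly from Definition \ref{2ab}. What your construction establishes is ``there exists \emph{some} sequence in $S$ which diverges to $\infty$ at $z$,'' which carries none of the force of ``\emph{every} sequence in $S$ has a divergent subsequence.'' The genuine difficulty is to control the values $g_k(z)$ of the given maps as $k$ varies: for instance, for $S=\langle f,g\rangle$ and the sequence $g_k = g\circ f^k$, one must produce indices $k_j$ with $g(f^{k_j}(z))\to\infty$, and iterative divergence of each fixed $g_k$ (a limit in $n$ with $k$ frozen) says nothing about these first-step values, since $g$ has an essential singularity at $\infty$ and may well keep $g(f^k(z))$ bounded even though $f^k(z)\to\infty$.

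Be aware also that no repair by ``fixing a convention'' is possible, because under the standard meaning of subsequence the statement is false as written: the constant sequence $g_k = f$ lies in $S$, and every one of its subsequences evaluates at $z$ to the single finite number $f(z)$, so none diverges. Any correct treatment must therefore first reinterpret ``sequence'' and ``subsequence'' (distinct elements, or composition sequences of increasing length), and then do real analytic work to exclude behaviour like the $g\circ f^k$ example above. For comparison with the source: the paper offers no proof at all --- Theorem \ref{ad1} is merely asserted to be ``clear from'' Definition \ref{2ab} and Proposition \ref{ts1} --- and your attempt is a plausible reconstruction of what the authors presumably had in mind; the substitution of iterates $g_k^{n_k}$ for the terms $g_k$ themselves is exactly the point at which that intended argument fails, and it cannot be closed without either strengthening the hypothesis (some uniformity of the divergence over $S$) or weakening the conclusion.
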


On the basis of the theorem \ref{ad1}, we can say that our definition \ref {2ab} of escaping set is more general than that of the definition of Dinesh Kumar and Sanjay Kumar {\cite[Definition 2.1]{kum2}}. That is, our definition of escaping set implies the definition of Dinesh Kumar and Sanjay Kumar.

\section{Basic Comparison of classical and holomorphic semigroup dynamics}
The following immediate relations hold for any $ f \in S $  from the definition \ref{2ab}. Indeed, it shows certain connection between classical complex dynamics and semigroup dynamics.

\begin{theorem}\label{1c}
Let $ S $  be a semigroup. Then
\begin{enumerate}  
 \item $F(S) \subset F(f)$ for all $f \in S$  and hence  $F(S)\subset \bigcap_{f\in S}F(f)$. 
\item $ J(f) \subset J(S) $ for all $f \in S$.
 \item  $I(S) \subset I(f)$ for all $f \in S$  and hence  $I(S)\subset \bigcap_{f\in S}I(f)$ in the case of transcendental semigroup $ S $.
\end{enumerate}
\end{theorem}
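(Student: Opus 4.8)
The plan is to derive all three containments directly from Definition \ref{2ab} together with the single structural fact that $S$ is closed under composition, so that for every $f \in S$ the whole iterate sequence $\{f^n : n \in \mathbb{N}\}$ is itself a subfamily of $S$; this is exactly the content of Proposition \ref{ts1} specialised to the repeated use of one generator.

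First I would settle part (1). Fix $f \in S$ and take $z \in F(S)$. By Definition \ref{2ab} there is a neighbourhood $U$ of $z$ on which $S$ is a normal family. Since $\{f^n\} \subset S$, any sequence drawn from $\{f^n\}$ is also a sequence in $S$ and therefore possesses a subsequence that converges or diverges uniformly on compact subsets of $U$. Hence $\{f^n\}$ is normal on $U$, so $z \in F(f)$. As $f$ was arbitrary this yields $F(S) \subset F(f)$ for every $f \in S$, and intersecting over all $f$ gives $F(S)\subset \bigcap_{f\in S}F(f)$. Part (2) is then immediate by complementation: since $J(f)$ and $J(S)$ are the complements of $F(f)$ and $F(S)$ in $\mathbb{C}_{\infty}$ (or $\mathbb{C}$), the inclusion $F(S) \subset F(f)$ dualises to $J(f) \subset J(S)$ for every $f \in S$.

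For part (3) I would unwind the definition of iterative divergence. Let $S$ be transcendental and let $z \in I(S)$. By Definition \ref{2ab}, $S$ is iteratively divergent at $z$, meaning that for each $f \in S$ we have $f^n(z) \to \alpha_f$ as $n \to \infty$, where $\alpha_f$ is the essential singularity of $f$; for a transcendental entire map this singularity is $\infty$, so $f^n(z) \to \infty$. But that is precisely the statement $z \in I(f)$ in the classical sense. Hence $I(S) \subset I(f)$ for every $f \in S$, and intersecting gives $I(S)\subset \bigcap_{f\in S}I(f)$.

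None of the three steps is genuinely difficult; the only point requiring care is the passage to subfamilies in part (1), where one must invoke the elementary fact that every subfamily of a normal family is again normal, which is what lets the normality of the full semigroup $S$ descend to the cyclic orbit $\{f^n\}$. I would not expect any of these inclusions to be equalities in general: the reverse containments fail precisely because $S$ carries far more sequences than a single orbit does, and this gap is exactly the source of the contrast with classical dynamics that the paper sets out to highlight.
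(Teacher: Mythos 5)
Your proposal is correct and follows exactly the route the paper intends: the paper states these inclusions as immediate consequences of Definition \ref{2ab} (offering no written proof), and your argument simply makes that explicit---using that $\{f^n\}\subset S$ so normality and iterative divergence of $S$ descend to each cyclic subfamily, with part (2) by complementation. Nothing is missing; the only care needed is the standard fact that a subfamily of a normal family is normal, which you correctly invoke.
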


Hinkkanen and Martin proved the following results ({\cite[Lemma 3.1 and Corollary 3.1]{hin}}).
\begin{theorem}\label{perf}
 Let $ S $ be a rational semigroup. Then
 Julia set $ J(S) $ is perfect and $ J(S) = \overline{\bigcup_{f \in S} J(f)} $ 
 \end{theorem}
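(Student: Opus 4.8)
The plan is to establish the set identity first and then read off perfectness from it. For the identity, one inclusion is essentially free: Theorem~\ref{1c}(2) gives $J(f)\subseteq J(S)$ for every $f\in S$, and since $J(S)$ is closed it contains the closure of the union, so $\overline{\bigcup_{f\in S}J(f)}\subseteq J(S)$. All the real work lies in the reverse inclusion $J(S)\subseteq\overline{\bigcup_{f\in S}J(f)}$. I would argue by contraposition: fix $z_{0}\notin\overline{\bigcup_{f}J(f)}$ and produce a neighbourhood of $z_{0}$ on which the whole family $S$ is normal, so that $z_{0}\in F(S)$. By the choice of $z_{0}$ there is a round neighbourhood $U$ with $U\cap J(f)=\emptyset$, i.e.\ $U\subseteq F(f)$, for \emph{every} $f\in S$ simultaneously; the task is then to convert this individual normality into joint normality of the composition family.

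The crux, and the step I expect to be the main obstacle, is this conversion. Individual normality of each $f$ does not formally imply normality of the whole family $S$, so I would bypass it by proving the semigroup analogue of the classical density theorem for repelling cycles: every neighbourhood of a point of $J(S)$ contains a repelling fixed point of some single element $g\in S$. Granting this, such a fixed point $p$ lies in $J(g)\subseteq\bigcup_{f}J(f)$, so every neighbourhood of a given $z_{0}\in J(S)$ meets $\bigcup_{f}J(f)$, whence $z_{0}\in\overline{\bigcup_{f}J(f)}$. To produce the repelling fixed point I would invoke Montel's theorem in the form of the blowing-up property: if $z_{0}\in J(S)$, then for every neighbourhood $U$ of $z_{0}$ the forward saturation $\bigcup_{h\in S}h(U)$ omits at most two points of $\mathbb{C}_{\infty}$. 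Combining this with the presence of an element $f_{0}$ of degree $\ge 2$ (so that $J(f_{0})$ is infinite and cannot be swallowed by the two exceptional values) lets one select an $h\in S$ carrying a subdomain of $U$ across $U$ with controlled expansion, after which a normal-families and fixed-point argument extracts a repelling fixed point of a suitable composition $g\in S$ inside $U$. Making the expansion estimate and the fixed-point extraction precise is the genuinely delicate part.

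Granting the identity, perfectness follows by a soft point-set argument. The set $J(S)$ is closed by definition, so it remains to rule out isolated points. Every nonempty Julia set $J(f)$ of a rational map of degree $\ge 2$ is perfect by the classical theory, while a rational map with empty Julia set contributes nothing to the union; hence no point of $\bigcup_{f}J(f)$ is isolated in its own $J(f)$, and \emph{a fortiori} none is isolated in $J(S)$. The remaining points of $J(S)=\overline{\bigcup_{f}J(f)}$ are genuine limit points of the union and so are not isolated either. Therefore $J(S)$ has no isolated points and is perfect. As in Hinkkanen and Martin, this uses that $S$ contains maps of degree $\ge 2$, so that $J(S)$ is infinite; for a semigroup consisting solely of M\"obius maps the individual Julia sets are empty and the statement degenerates. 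The only substantial ingredient in the whole argument is thus the reverse inclusion, where Montel's theorem and the repelling-fixed-point construction must be combined.
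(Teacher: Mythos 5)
The paper you were asked to match contains no proof of Theorem \ref{perf} at all: the statement is quoted from Hinkkanen and Martin \cite{hin} (their Lemma 3.1 and Corollary 3.1), so your attempt has to be compared with the argument in that source. Your overall strategy is in fact the same as theirs. The inclusion $\overline{\bigcup_{f\in S}J(f)}\subseteq J(S)$ is exactly as you say (Theorem \ref{1c}(2) plus closedness of $J(S)$), and Hinkkanen and Martin obtain the reverse inclusion by precisely the route you outline: they prove, via Montel's theorem in the blowing-up form, that repelling fixed points of single elements of $S$ are dense in $J(S)$, and since each such point lies in $J(g)$ for the corresponding $g\in S$, the identity drops out as a corollary.

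That said, as a proof your proposal has a genuine gap and one outright error. The gap: the density of repelling fixed points is not a lemma you may defer --- it is the entire mathematical content of the theorem. The sentence ``a normal-families and fixed-point argument extracts a repelling fixed point of a suitable composition $g\in S$ inside $U$'' states the goal rather than achieving it; selecting $h\in S$ that maps a subdomain of $U$ over $U$, producing a univalent inverse branch, and running the contraction argument that makes the fixed point repelling (not merely fixed) is exactly the work carried out in Section 3 of \cite{hin}, and without it you have established only the trivial inclusion. The error: it is false that ``for a semigroup consisting solely of M\"obius maps the individual Julia sets are empty.'' A loxodromic M\"obius transformation, say $z\mapsto 2z$, has Julia set $\{0\}$, a nonempty singleton; hence the dichotomy you rely on (each $J(f)$ is perfect or empty) fails, and with it your perfectness argument, since such a point lies in $\bigcup_{f\in S}J(f)$ but is isolated in its own $J(f)$. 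Indeed, under the present paper's liberal definition of rational semigroup (no degree restriction) the theorem itself is false: $S=\langle 2z\rangle$ has $J(S)=\{0\}$, which is not perfect. The statement is correct under the hypothesis, built into Hinkkanen and Martin's definition but silently dropped here, that every element of $S$ has degree at least two; note also that they prove perfectness directly from the blowing-up property and backward invariance, before and independently of the union identity, which avoids your dichotomy altogether.
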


K. K. Poon proved the following results ({\cite[Theorems 4.1 and 4.2] {poo}}).
\begin{theorem}\label{perf1}
 Let $ S $ be a transcendental semigroup. Then
 Julia set $ J(S) $ is perfect and $ J(S) = \overline{\bigcup_{f \in S} J(f)} $ 
 \end{theorem}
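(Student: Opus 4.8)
The plan is to prove the two assertions separately, establishing the identity $J(S)=\overline{\bigcup_{f\in S}J(f)}$ first and then reading off perfectness, though in fact perfectness can also be obtained directly. Throughout I would use two standard inputs: that for a single transcendental entire map $f$ the Julia set $J(f)$ is a non-empty \emph{perfect}, hence infinite, set; and that $J(S)$ is \emph{backward invariant}, i.e. $f^{-1}(J(S))\subseteq J(S)$ for every $f\in S$. The latter I would record at the outset straight from the definition: if $z\in F(S)$ then $S$ is normal near $z$, and a short normal-family argument using that each $g\circ f\in S$ shows $f(F(S))\subseteq F(S)$; taking complements gives backward invariance of $J(S)$.

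For the easy inclusion of the identity, Theorem \ref{1c}(2) gives $J(f)\subseteq J(S)$ for every $f\in S$, so $\bigcup_{f\in S}J(f)\subseteq J(S)$; since $J(S)$ is closed, passing to the closure yields $\overline{\bigcup_{f\in S}J(f)}\subseteq J(S)$.

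I would treat perfectness next, since it is self-contained. As $J(S)$ is closed by definition, it suffices to show it has no isolated points. Fix $z_{0}\in J(S)$ and an arbitrary neighbourhood $N$ of $z_{0}$. Because $z_{0}\in J(S)$, the family $S$ is not normal on $N$, so by Montel's theorem the functions of $S$ cannot omit two common values on $N$; hence $\bigcup_{g\in S}g(N)$ misses at most one point of $\mathbb{C}$. Choosing any $g_{0}\in S$, its Julia set $J(g_{0})$ is infinite, so some $p\in J(g_{0})$ lies in $g(N)$ for a suitable $g\in S$, say $p=g(z)$ with $z\in N$. Then
$$
z\in g^{-1}(J(g_{0}))\subseteq g^{-1}(J(S))\subseteq J(S),
$$
using $J(g_{0})\subseteq J(S)$ and backward invariance. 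Thus $N\cap J(S)\neq\emptyset$, and a routine refinement, varying $p$ over the infinite set $J(g_{0})$, yields such a point distinct from $z_{0}$. Hence $z_{0}$ is not isolated and $J(S)$ is perfect.

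The remaining and hardest step is the reverse inclusion $J(S)\subseteq\overline{\bigcup_{f\in S}J(f)}$, equivalently $\Omega:=\mathbb{C}\setminus\overline{\bigcup_{f\in S}J(f)}\subseteq F(S)$. On $\Omega$ every single $g\in S$ is already normal under iteration, since $\Omega\subseteq F(g)$ for all $g$; what must be shown is that the \emph{whole} family $S$ is normal on $\Omega$, and the plan is to deduce this from Montel by exhibiting, near each point of $\Omega$, an infinite set of values omitted by all of $S$. Here I expect the real obstacle to lie. The tempting route is to prove that $\bigcup_{f\in S}J(f)$ is backward invariant under $S$, for then $\Omega$ would be forward invariant and $S$ would omit on $\Omega$ the whole infinite set $\bigcup_{f}J(f)$, giving normality at once. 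This cannot be argued naively, however: the inclusion $J(h)\subseteq J(g\circ h)$ is \emph{false} in general, so one cannot simply transport a preimage $g^{-1}(J(h))$ into a single Julia set of $S$. The way I would attempt to push it through is via the classical composition relation
$$
J(g\circ f)=f^{-1}\bigl(J(f\circ g)\bigr),
$$
valid because $f$ semiconjugates $g\circ f$ to $f\circ g$. Combining this relation across the generators with backward invariance of $J(S)$ and passing to closures, the goal is to show that any point of $\Omega$ carried by some $g\in S$ into $\bigcup_{f}J(f)$ must itself lie in $\overline{\bigcup_{f}J(f)}$, contradicting membership in $\Omega$; thus no point of $\Omega$ can be blown up onto the omitted infinite set, and Montel then forces normality of $S$ on $\Omega$. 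Making this bookkeeping precise, in particular navigating around the failure of $J(h)\subseteq J(g\circ h)$, is the crux of the whole proof, and it parallels the rational case recorded in Theorem \ref{perf}. Once the inclusion is secured the identity is complete, and perfectness follows a second time since each $J(f)$ appearing in the union is itself perfect.
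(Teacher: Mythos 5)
First, a point of orientation: the paper does not prove this theorem at all — it quotes it from Poon \cite{poo} (Theorems 4.1 and 4.2 there), exactly as the rational analogue, Theorem \ref{perf}, is quoted from Hinkkanen--Martin \cite{hin}. So your attempt has to be measured against the proofs in that cited literature. Your preparatory material is correct: backward invariance of $J(S)$, the inclusion $\overline{\bigcup_{f\in S}J(f)}\subseteq J(S)$ from Theorem \ref{1c}(2) plus closedness, and the remark that once the identity is established, perfectness is immediate because each $J(f)$ is perfect and the closure of a union of perfect sets is perfect. The trouble is that the one hard step, $J(S)\subseteq\overline{\bigcup_{f\in S}J(f)}$, is exactly the step you leave as a plan, and the plan rests on an intermediate statement that is false. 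The set $B=\bigcup_{f\in S}J(f)$ is \emph{not} backward invariant under $S$; the paper's own Example \ref{efi} refutes it. For $S=\langle z^{2},z^{2}/a\rangle$ with $|a|>1$, every element of $S$ has the form $z^{2^{m}}/a^{k}$ with $0\le k\le 2^{m}-1$, and its Julia set is the circle $|z|=|a|^{k/(2^{m}-1)}$; the preimage of $J(z^{2})=\{|z|=1\}$ under $z^{2}/a$ is the circle $|z|=|a|^{1/2}$, and $k/(2^{m}-1)=1/2$ is impossible (it forces $2k=2^{m}-1$, even equal to odd), so this preimage misses $B$ entirely. The example is rational, but transcendence does nothing to repair the phenomenon. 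What is true is backward invariance of the \emph{closure} $\overline{B}$ — but that statement yields the whole theorem in one Montel step, so taking it as the target of ``bookkeeping'' with the relation $J(g\circ f)=f^{-1}(J(f\circ g))$ begs the question: that relation only controls preimages of Julia sets of elements that factor through the map being inverted, which is precisely what fails in the example above.

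What the cited proofs do instead is genuinely dynamical rather than set-theoretic: given $z_{0}\in J(S)$ and a neighbourhood $U$, non-normality of $S$ on $U$ together with Montel's theorem is used to manufacture a single element $g\in S$ mapping a small disk inside $U$ properly across itself, and a Rouch\'e/hyperbolic-metric argument then gives $g$ a repelling fixed point in $U$; that fixed point lies in $J(g)\subseteq B$, so $B$ meets every such $U$. In other words, the missing ingredient is the density in $J(S)$ of repelling fixed points of elements of $S$ (Hinkkanen--Martin in the rational case; Poon's transcendental version leans on Baker's classical theory of $J(f)$). Without some device of this kind that produces points of \emph{individual} Julia sets in arbitrary neighbourhoods of points of $J(S)$, the inclusion cannot be reached. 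A second, smaller gap: in your direct perfectness argument the ``routine refinement'' is not routine, because every preimage point you produce could coincide with $z_{0}$ — distinct points $p\in J(g_{0})$ may all arise as $p=g_{p}(z_{0})$ for distinct maps $g_{p}\in S$. Ruling this out means showing $J(g_{0})$ cannot lie in the forward orbit $\{g(z_{0}):g\in S\}$ together with one omitted value; that follows by cardinality ($J(g_{0})$ is uncountable) when $S$ is countably generated, but the paper allows uncountable generating sets, and in any case it is an argument you must supply, not a variation of $p$.
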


From the  theorem \ref{1c} ((1) and (3)), we can say that the Fatou set and the escaping set may be empty.
For example, the escaping set of semigroup $ S = \langle f, g \rangle $ generated by functions $ f(z) = e^{z} $ and $ g(z) =  e^{-z}$ is empty (the particular function $ h = g \circ f^{k} \in S $ (say) is iteratively bounded at any $ z \in I(f) $).
We know that Fatou set may be empty  but escaping set is non-empty in classical complex dynamics. This is a contrast feature of escaping set in classical complex dynamics and semigroup dynamics. From the same theorem part (2), we can say that in classical and semigroup dynamics,  Julia set is non-empty.

Dinesh Kumar and Sanjay Kumar {\cite [Theorem 2.5]{kum2}} have identified the following transcendental  semigroup $S$, where $I(S)$ is an empty set.
\begin{theorem}\label{e}
The transcendental semigroup $S = \langle f_{1},\;f_{2}\rangle$  generated by two functions $f_{1}$ and $ f_{2} $ from  respectively two parameter family  $\{e^{-z+\gamma}+c\;  \text{where}\;  \gamma, c\in \mathbb{C} \; \text{and}\;  Re(\gamma)<0, \; Re(c)\geq 1\}$ and $\{e^{z+\mu}+d, \; \text{where}\;  \mu, d\in \mathbb{C} \; \text{and}\; Re(\mu)<0,  \; Re(d)\leq -1\}$ of functions  has empty escaping set $I(S)$ \end{theorem}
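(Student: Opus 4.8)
The plan is to reduce the claim to two elementary estimates on the generators by invoking Theorem \ref{1c}(3), which yields $I(S) \subset I(f_1)$ and $I(S) \subset I(f_2)$, hence $I(S) \subset I(f_1) \cap I(f_2)$. It therefore suffices to show that this intersection is empty, and I would do so by proving that $I(f_1)$ lies in the open left half-plane while $I(f_2)$ lies in the open right half-plane; as these are disjoint, $I(S) = \emptyset$ follows immediately.

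First I would examine $f_1(z) = e^{-z+\gamma} + c$ on the closed right half-plane $\{Re(z) \ge 0\}$. Setting $\rho := e^{Re(\gamma)}$, the hypothesis $Re(\gamma) < 0$ gives $\rho < 1$, and for $Re(z) \ge 0$ we get $|f_1(z) - c| = |e^{-z+\gamma}| = e^{-Re(z)+Re(\gamma)} \le \rho < 1$. Thus $f_1$ carries the entire closed right half-plane into the closed disk $\overline{D}(c,\rho)$. Because $Re(c) \ge 1$, every $w \in \overline{D}(c,\rho)$ satisfies $Re(w) \ge Re(c) - \rho \ge 1 - \rho > 0$, so $\overline{D}(c,\rho)$ is itself contained in the right half-plane and is hence forward invariant under $f_1$. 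Consequently each iterate $f_1^n(z)$ with $Re(z) \ge 0$ stays in the bounded set $\overline{D}(c,\rho)$ and cannot converge to the essential singularity $\infty$; that is, no point of $\{Re(z) \ge 0\}$ lies in $I(f_1)$, so $I(f_1) \subset \{Re(z) < 0\}$.

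The mirror-image computation disposes of $f_2(z) = e^{z+\mu} + d$. With $\sigma := e^{Re(\mu)} < 1$, the bound $|f_2(z) - d| = e^{Re(z)+Re(\mu)} \le \sigma < 1$ for $Re(z) \le 0$ shows $f_2$ maps the closed left half-plane into $\overline{D}(d,\sigma)$, and since $Re(d) \le -1$ this disk sits inside $\{Re(z) < 0\}$ and is forward invariant; the iterates remain bounded, giving $I(f_2) \subset \{Re(z) > 0\}$. Combining the two inclusions, $I(S) \subset I(f_1) \cap I(f_2) \subset \{Re(z) < 0\} \cap \{Re(z) > 0\} = \emptyset$, which is the assertion.

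I do not anticipate a genuine obstacle: the whole argument hinges on noticing that the parameter constraints $Re(\gamma) < 0$, $Re(c) \ge 1$ (and their mirror $Re(\mu) < 0$, $Re(d) \le -1$ for $f_2$) are precisely calibrated so that each generator contracts a half-plane into a small disk lying strictly inside that same half-plane. The only delicate point is verifying that the target disk really falls in the open half-plane, which is where the inequalities $\rho < 1 \le Re(c)$ and $\sigma < 1 \le -Re(d)$ are used, together with the observation that the two generators use opposite half-planes, so that their escaping sets are forced to be disjoint.
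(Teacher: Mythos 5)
Your proof is correct, and its skeleton coincides with the paper's: both reduce the claim via Theorem \ref{1c}(3) to the disjointness of $I(f_1)$ and $I(f_2)$, obtained by locating $I(f_1)$ in the open left half-plane and $I(f_2)$ in the open right half-plane. The difference lies in how those containments are established. The paper simply cites Lemmas 2.2 and 2.3 of Kumar--Kumar \cite{kum2}, which give the finer statement that the escaping sets lie in certain horizontal strips inside those half-planes; you instead prove the weaker, but sufficient, half-plane containments from scratch, by showing that each generator maps its ``wrong'' closed half-plane into a disk $\overline{D}(c,\rho)$ (resp.\ $\overline{D}(d,\sigma)$) of radius less than $1$ which the parameter constraints force to lie strictly inside the opposite open half-plane, whence the disk is forward invariant and every orbit starting in that closed half-plane remains bounded and cannot tend to the essential singularity $\infty$. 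Your version is thus self-contained where the paper's is not, and it isolates exactly which parts of the parameter restrictions do the work (namely $e^{Re(\gamma)} < 1 \le Re(c)$ and $e^{Re(\mu)} < 1 \le -Re(d)$); what you give up relative to the cited lemmas is only the sharper strip information about where the escaping sets live, which is irrelevant to the emptiness of the intersection.
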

\begin{proof}
Under the conditions stated in the theorem, for any function $ f $ from the first family we have $I(f) \subset \{z \in \mathbb{C}: Re z < 0, (4k -3)\frac{\pi}{2}< Im z < (4k -1)\frac{\pi}{2}, k \in \mathbb{Z}\}$ (see for instance  {\cite[Lemma 2.2]{kum2}}) and for any function $ g $ from the second family we have $I(g) \subset \{z \in \mathbb{C}: Re z > 0, (4k -1)\frac{\pi}{2}< Im z < (4k +1)\frac{\pi}{2}, k \in \mathbb{Z}\}$(see for instance  {\cite[Lemma 2.3]{kum2}}). From theorem \ref{1c}, for any $ f,\; g \in S = \langle f_{1},\;f_{2}\rangle $,  we have $ I(S)\subset I(f) \cap I(g) = \emptyset $. 
\end{proof}
There are several transcendental semigroups where escaping set $ I(S) \neq \emptyset $. The following examples of Dinesh Kumar and Sanjay Kumar {\cite[Examples 2.6 and 2.7]{kum2}} are better to mention here. 
\begin{exm}
Let $ S = \langle e^{\lambda z},  e^{s\lambda z} + 2\pi i /\lambda \rangle$ for all $ \lambda \in \mathbb{C} -\{0\} $  and $ s \in \mathbb{N} $. Then  $ I(S) =I(f) \neq \emptyset $. 
\end{exm}

\begin{exm}
Let $ S = \langle f, g \rangle $, where $ f(z) = \lambda \sin z $ ($ \lambda \in \mathbb{C} -\{0\} $) and $ g(z) = f^{n} + 2 \pi $ for all $ n \in \mathbb{N} $. Then $ I(S) =I(f) \neq \emptyset $. 
\end{exm}

If escaping set $ I(S) \neq \emptyset $, then Eremenko's  result $\partial I(f) = J(f)$ \cite{ere} of classical transcendental dynamics can be generalized to semigroup settings. The following results is due to Dinesh Kumar and Sanjay Kumar {\cite [Lemma 4.2 and Theorem 4.3]{kum2}} which yield the generalized answer in semigroup settings.
\begin{theorem}\label{3}
Let $S$ be a transcendental semigroup such that $ I(S) \neq \emptyset $. Then
\begin{enumerate}
\item $int(I(S))\subset F(S)\;\ \text{and}\;\ ext(I(S))\subset F(S) $, where $int$ and $ext$ respectively denote the interior and exterior of $I(S)$.  
\item $\partial I(S) = J(S)$, where $\partial I(S)$ denotes the boundary of $I(S)$. 
\end{enumerate}
\end{theorem}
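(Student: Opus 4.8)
The plan is to split the statement into the three inclusions $\operatorname{int}(I(S))\subset F(S)$, $\operatorname{ext}(I(S))\subset F(S)$ and $\partial I(S)\subset J(S)$, and then reassemble them. Part (1) is precisely the first two inclusions. Part (2) then follows for free: since $\mathbb{C}$ is the disjoint union of $\operatorname{int}(I(S))$, $\partial I(S)$ and $\operatorname{ext}(I(S))$, the first two inclusions say that $J(S)$ meets neither the interior nor the exterior of $I(S)$, forcing $J(S)\subset\partial I(S)$; together with $\partial I(S)\subset J(S)$ this yields the equality. So all the work sits in the three inclusions, and I would record at the outset the identity $I(S)=\bigcap_{f\in S}I(f)$, which is immediate from Definition \ref{2ab} and refines Theorem \ref{1c}(3).

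For the interior I would exploit monotonicity together with the classical case. By Theorem \ref{1c} we have $I(S)\subset I(f)$ for every $f\in S$, so the \emph{open} set $W:=\operatorname{int}(I(S))$ is an open subset of $I(f)$ and hence $W\subset\operatorname{int}(I(f))$. Eremenko's identity $\partial I(f)=J(f)$ \cite{ere} gives $\operatorname{int}(I(f))\cap J(f)=\varnothing$, so $W\cap J(f)=\varnothing$ for every $f\in S$. Consequently $W$ is disjoint from $\bigcup_{f\in S}J(f)$, and since $W$ is open it is also disjoint from the closure $\overline{\bigcup_{f\in S}J(f)}=J(S)$ (Theorem \ref{perf1}). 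Thus $W\subset F(S)$.

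For $\partial I(S)\subset J(S)$ I would prove that every Fatou component of $S$ is escaping-saturated: if it meets $I(S)$ then it lies in $I(S)$. Fix a component $U$ of $F(S)$ with a point $z_{1}\in U\cap I(S)$, and fix $f\in S$. Since $F(S)\subset F(f)$ (Theorem \ref{1c}), $U$ lies in a single Fatou component $U_{f}$ of $f$; as $z_{1}\in I(S)\subset I(f)$, every locally uniform subsequential limit of $(f^{n})$ on $U_{f}$ takes the value $\infty$ at $z_{1}$ and is therefore identically $\infty$, so $f^{n}\to\infty$ locally uniformly on $U_{f}$ and $U_{f}\subset I(f)$. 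As $f$ was arbitrary, $U\subset\bigcap_{f\in S}I(f)=I(S)$. Now if some $z_{0}\in\partial I(S)$ lay in $F(S)$, its Fatou component $U$ would be an open neighbourhood of a boundary point and would therefore meet $I(S)$, forcing $U\subset I(S)$, hence $z_{0}\in\operatorname{int}(I(S))$, a contradiction. So $\partial I(S)\subset J(S)$.

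The remaining inclusion $\operatorname{ext}(I(S))\subset F(S)$ is the crux, and it is genuinely harder than in the classical case. It is equivalent to $J(S)\subset\overline{I(S)}$, the density of the escaping set of the \emph{whole} semigroup in $J(S)$. Here monotonicity works against us: from $I(S)\subset I(f)$ one gets $\operatorname{ext}(I(f))\subset\operatorname{ext}(I(S))$, so the classical fact $\operatorname{ext}(I(f))\subset F(f)$ gives nothing about the larger exterior. Instead I would adapt Eremenko's construction of escaping points inside the Julia set: starting from a point of $I(S)$ (which exists by hypothesis) and an arbitrary $w_{0}\in J(S)=\overline{\bigcup_{f}J(f)}$, I would use the blowing-up/expansion property of $J(S)$ to manufacture, in any neighbourhood of $w_{0}$, points whose orbits are driven to $\infty$ simultaneously under every element of $S$. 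The main obstacle is exactly this simultaneity: membership in $I(S)$ demands $f^{n}(z)\to\infty$ for \emph{all} $f\in S$ at once, and unlike the single-map escaping set, $I(S)$ is not backward invariant under the generators, so escaping points cannot simply be pulled back along preimages. Controlling all generators at once, using only $I(S)\neq\varnothing$ together with the Eremenko-type density available for each individual $f$, is where the real difficulty lies.
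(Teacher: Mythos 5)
Your decomposition into three inclusions and the reassembly of part (2) from them is sound, and two of your three arguments are correct and complete: the interior inclusion (an open subset of $I(S)\subset I(f)$ lies in $int(I(f))$, hence misses $J(f)=\partial I(f)$ by Eremenko's theorem \cite{ere}, hence---being open---misses $J(S)=\overline{\bigcup_{f\in S}J(f)}$ of Theorem \ref{perf1}), and the boundary inclusion $\partial I(S)\subset J(S)$ via saturation of Fatou components of $S$ by $I(S)$. You actually supply more detail than the paper does in-house: the paper's proof of part (1) is a citation to Lemma 4.2 of \cite{kum2}, and its proof of part (2) declares $\partial I(S)\subset J(S)$ to be obvious; your component argument is precisely the justification of that word, and your identity $I(S)=\bigcap_{f\in S}I(f)$ is indeed immediate from Definition \ref{2ab}.

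However, your proposal has a genuine gap, which you candidly flag but do not close: the exterior inclusion $ext(I(S))\subset F(S)$, equivalently $J(S)\subset\overline{I(S)}$, is never proved. Your final paragraph is a plan plus a list of obstacles, not an argument: you do not produce, near a given $w_{0}\in J(S)$, even one point escaping simultaneously under every $f\in S$, and your diagnosis of why the classical mechanism fails is exactly right---$I(S)=\bigcap_{f\in S}I(f)$ is not backward invariant, since $g(z)\in I(S)$ gives control of $f^{n}(g(z))$ but not of $f^{n}(z)$ when $f$ and $g$ do not commute. Because the inclusion $J(S)\subset\partial I(S)$ in part (2) needs both halves of part (1), this gap infects part (2) as well. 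For comparison, the paper does not prove this step either: it outsources all of part (1) to \cite{kum2}, and that citation is itself imperfect, since the escaping set there is defined by a subsequence condition (compare the discussion following Theorem \ref{ad1}) rather than by the intersection formula that your argument and Definition \ref{2ab} use. So your assessment of where the real difficulty lies is accurate; to complete the proof at the paper's level of rigor you would either have to construct the escaping points near $J(S)$ directly, or verify that the proof of Lemma 4.2 of \cite{kum2} survives the change of definition of $I(S)$.
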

\begin{proof}
\begin{enumerate}
\item We refer for instance lemma 4.2 of \cite{kum2}.
\item The facts $int(I(S))\subset F(S)\;\ \text{and}\;\ ext(I(S))\subset F(S)$ yield $ J(S)\subset \partial I(S) $. The fact $ \partial I(S)\subset J(S) $ is obvious.
\end{enumerate}
\end{proof}
From this theorem \ref{3}, the fact $ J(S) \subset \overline{I(S)} $ follows trivially. 
If $ I(S) \neq \emptyset $, then we prove the following result which is a generalization of Eremenko's  result $I(f)\cap J(f) \neq \emptyset $ {\cite[Theorem 2]{ere}} of classical transcendental dynamics to holomorphic semigroup dynamics. 
\begin{theorem}\label{lu1}
Let $S$ be a transcendental semigroup such that $ F(S)$ has a multiply connected component. Then $I(S)\cap J(S) \neq \emptyset $
\end{theorem}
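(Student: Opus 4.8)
The plan is to produce a single point lying in both $J(S)$ and $I(S)$ by exploiting the hole of the multiply connected Fatou component $U$, following Baker's classical strategy for locating escaping Julia points inside such components. First I would set up the topology: since $U$ is not simply connected it contains a Jordan curve $\gamma$ that is not contractible in $U$; let $D$ be the bounded Jordan domain with $\partial D=\gamma$, so that non-contractibility forces $D\not\subset U$ and hence $D\cap(\mathbb{C}\setminus U)\neq\emptyset$. I claim $D$ meets $J(S)$: were $\overline{D}\subset F(S)$, then $\overline{D}$ would be a connected subset of $F(S)$ meeting the component $U$ along $\gamma$, forcing $\overline{D}\subset U$ and contradicting $D\not\subset U$. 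Fix such a point $z^{*}\in J(S)\cap D$; using $J(S)=\overline{\bigcup_{f\in S}J(f)}$ from Theorem \ref{perf1} one may in fact take $z^{*}\in J(g)$ for some $g\in S$.

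Next I would show that this enclosed point escapes. The heart of the argument is to prove that $U\subset I(S)$, i.e. that for every $f\in S$ the iterates $f^{n}$ tend to $\infty$ locally uniformly on $U$. Fix $f\in S$. By Theorem \ref{1c} we have $U\subset F(f)$, so $U$ lies in a single Fatou component $V_{f}$ of $f$, with $\gamma\subset V_{f}$; when $V_{f}$ is multiply connected, Baker's classical theorem on multiply connected Fatou components of transcendental entire maps gives $f^{n}\to\infty$ uniformly on $V_{f}\supset U$. Granting $U\subset I(S)$, fix $f\in S$: pointwise escape on $U$ together with normality of $S$ on $U$ upgrades to $f^{n}\to\infty$ uniformly on the compact curve $\gamma$. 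Hence the image curves $f^{n}(\gamma)$ leave every bounded set, and by the minimum-modulus growth estimates for transcendental entire functions underlying Baker's theorem (images of large circles surround annuli whose inner radius tends to $\infty$), the region enclosed by $\gamma$ is swept to infinity together with its boundary; in particular $f^{n}(z^{*})\to\infty$. As this holds for every $f\in S$, we get $z^{*}\in I(S)$, whence $z^{*}\in I(S)\cap J(S)$, which is the assertion. Note that once $I(S)\neq\emptyset$ is secured, Theorem \ref{3} and the inclusion $J(S)\subset\overline{I(S)}$ are fully consistent with this conclusion.

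The main obstacle is precisely the step $U\subset I(S)$. The difficulty is that $U$ being multiply connected as a component of $F(S)$ does \emph{not} automatically make the enclosing component $V_{f}$ of an individual generator $f$ multiply connected: since $\gamma$ may be contractible in $V_{f}$, the hole of $U$ (and hence $z^{*}$) can lie inside $V_{f}$, and if $V_{f}$ happens to be a non-escaping simply connected component of $f$ (for instance an attracting basin) then $f^{n}\not\to\infty$ on $U$ and the chosen $z^{*}$ fails to escape under $f$, so $z^{*}\notin I(f)\supseteq I(S)$. Thus the crux is to rule out such components, i.e. to show that the presence of $J(S)$-points in every hole of $U$ forces each $V_{f}$ to be an escaping (in particular multiply connected) component of $f$ — equivalently, that a hole of the \emph{semigroup} Fatou component is a genuine dynamical hole for every $f\in S$. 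The required technical input is Baker's theorem on multiply connected components together with its minimum-modulus estimates, now applied uniformly across all $f\in S$; this uniformity over the whole (possibly non-abelian, infinitely generated) semigroup is exactly where the classical single-map argument does not transfer for free.
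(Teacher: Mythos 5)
Your proposal does not prove the theorem, and you concede as much: everything is reduced to the claim $U\subset I(S)$, and your last paragraph admits you cannot rule out that the component $V_{f}$ of $F(f)$ containing $U$ is simply connected (e.g.\ an attracting basin), in which case Lemma \ref{lu} gives nothing for that $f$. For comparison, the paper's proof of Theorem \ref{lu1} stands or falls on exactly this point, and it disposes of it by assertion: from $F(S)\subset F(f)$ (Theorem \ref{1c}(1)) it declares that $U$ ``is also a multiply connected component of $F(f)$'' for every $f\in S$, then applies Lemma \ref{lu} and concludes that $f^{n}\to\infty$ on $U$ and on $\partial U$, so that $\partial U\subset I(S)\cap J(S)$. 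The inclusion of open sets only yields $U\subset V_{f}$, not that $V_{f}$ is multiply connected, so the obstacle you isolate is genuine and is precisely where the paper's own argument is weakest; but identifying the crux and then stopping is not a proof.

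There is a second, independent error in your proposal, and it is fatal to your choice of witness point. You pick $z^{*}\in J(S)\cap D$, where $D$ is a hole of $U$, and claim that uniform escape of $f^{n}$ on $\gamma=\partial D$ forces ``the region enclosed by $\gamma$'' to be ``swept to infinity''. That implication is false: the maximum principle gives an upper bound for $|f^{n}|$ inside $\gamma$, never a lower bound, and the minimum-modulus estimates you invoke concern images of curves, not of the domains they bound (indeed, by the argument principle the image of $D$ tends to cover a large disk, the opposite of escaping). Already for $f(z)=z^{2}$ and $\gamma=\{|z|=2\}$ the curve escapes uniformly while the enclosed fixed point $0$ stays put. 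In the present setting the failure is unavoidable: by Theorem \ref{perf1}, $D$ meets $J(g)$ for some $g\in S$, repelling periodic points of $g$ are dense in $J(g)$, so $D$ contains a point $p\in J(g)\subset J(S)$ whose $g$-orbit is bounded; then $p\notin I(g)$ and, by Theorem \ref{1c}(3), $p\notin I(S)$. So an arbitrary point of $J(S)\cap D$ simply need not escape, and no argument can make it do so. This is why the paper takes its witnesses on $\partial U$ rather than inside a hole: $\partial U\subset J(S)$ is automatic for a component $U$ of $F(S)$, and the escaping property is pulled to $\partial U$ from the (claimed) uniform escape on $\overline{U}$ coming from Baker's annulus structure. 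If you want to salvage your write-up, discard $z^{*}$ and $D$, use $\partial U$ as the witness set, and concentrate entirely on the crux you already identified: proving that every $f\in S$ escapes locally uniformly on $\overline{U}$.
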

Following result of Baker {\cite[Theorem 3.1]{bak1}} is better to worth mention. 
\begin{lem}\label{lu}
Let $ f $ be a transcendental entire function and $ U $ be a multiply connected component of $ F(f) $. Then $ f^{n}(z) \to \infty $ locally uniformly on  $ U$. 
\end{lem}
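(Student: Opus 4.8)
The plan is to establish the apparently stronger statement that every locally uniformly convergent subsequence of the iterates $(f^{n})$ on $U$ has limit the constant $\infty$. Since $U\subset F(f)$, the family $\{f^{n}\}$ is normal on $U$, and a sequence drawn from a normal family all of whose subsequential limits coincide converges to that common limit; hence this stronger statement yields $f^{n}\to\infty$ locally uniformly on $U$. Equivalently, it suffices to rule out the possibility that some subsequence $f^{n_{k}}$ converges locally uniformly on $U$ to a finite holomorphic function $\phi$, because a locally uniform limit of the entire maps $f^{n_{k}}$ is either holomorphic or identically $\infty$.

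First I would use the hypothesis that $U$ is multiply connected to produce a Jordan curve $\gamma\subset U$ that is not null-homotopic in $U$. Such a $\gamma$ bounds a Jordan domain $D$ which contains a bounded component $K$ of $\mathbb{C}\setminus U$ (a ``hole'' of $U$). Since $\partial K\subset\partial U\subset J(f)$, the open set $D$ meets the Julia set; this is the geometric feature of multiple connectivity that drives the whole argument.

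Next, assume for contradiction that $f^{n_{k}}\to\phi$ with $\phi$ finite. On the compact curve $\gamma$ the convergence is uniform, so $|f^{n_{k}}|\le C$ on $\gamma$ for all large $k$. Here the crucial point is that each $f^{n_{k}}$ is \emph{entire}, hence has no poles, so the maximum modulus principle applies and gives $|f^{n_{k}}|\le C$ on the whole of $\overline{D}\supset K$. By Montel's theorem $\{f^{n_{k}}\}$ is therefore normal on the open set $D$, and passing to a further subsequence we obtain $f^{n_{k}}\to\Phi$ locally uniformly on $D$ for some finite holomorphic $\Phi$. This step is exactly where transcendentality enters as entireness: for a rational map the iterates could carry poles into $D$ and the maximum principle would fail, which is consistent with the existence of multiply connected (Herman ring) components on which the iterates do not escape.

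Finally I would extract the contradiction at a periodic point. Since repelling periodic points are dense in $J(f)$ and $D$ is an open set meeting $J(f)$, the domain $D$ contains a repelling periodic point $p$, say of period $m$ with multiplier $|(f^{m})'(p)|>1$. Restricting to a subsequence with $n_{k}\equiv r\pmod m$, the chain rule around the cycle gives $|(f^{n_{k}})'(p)|=|(f^{r})'(p)|\,|(f^{m})'(p)|^{(n_{k}-r)/m}\to\infty$, whereas the local uniform convergence $f^{n_{k}}\to\Phi$ forces $(f^{n_{k}})'(p)\to\Phi'(p)$, a finite number. This contradiction rules out a finite subsequential limit and completes the proof. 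I expect the main obstacle to be marshalling the two nontrivial ingredients correctly, namely the maximum-principle bound, which genuinely requires $f$ to be entire, and the density of repelling periodic points in $J(f)$ for transcendental entire functions, together with the topological verification that a noncontractible curve in $U$ really does enclose a hole whose interior meets $J(f)$.
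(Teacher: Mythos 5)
Your proposal is correct, and in fact it supplies more than the paper does: the paper states this lemma without proof, simply citing Baker's Theorem 3.1 from \emph{Wandering domains in the iteration of entire functions}, so there is no in-paper argument to compare against. What you have reconstructed is essentially Baker's classical proof: a non-contractible Jordan curve $\gamma\subset U$ whose interior $D$ contains a bounded complementary component of $U$ and hence points of $J(f)$; the maximum modulus principle (legitimately invoked because $f$ is entire, as you correctly stress) to promote a bound on $\gamma$ to a bound on $\overline{D}$; and the derivative blow-up $\left|(f^{n_k})'(p)\right|\to\infty$ at a repelling periodic point $p\in D$, contradicting Weierstrass convergence of derivatives. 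All steps check: the reduction from ``no finite subsequential limit'' to full locally uniform divergence is a standard normal-families argument, the fact that a bounded complementary component $K$ satisfies $\partial K\subset\partial U\subset J(f)$ is sound, and the pigeonhole restriction to a residue class mod the period makes the chain-rule computation valid, with $(f^r)'(p)\neq 0$ guaranteed since $(f^m)'(p)\neq 0$. The one external ingredient, density of repelling periodic points in $J(f)$ for transcendental entire functions, is Baker's 1968 theorem (proved via the Ahlfors five-islands theorem), and you were right to flag it explicitly as nontrivial rather than folklore.
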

\begin{proof}[Proof of the Theorem \ref{lu1}]
Suppose $ F(S) $ has a multiply connected component $ U $. Then by theorem \ref{1c} (1), $ U $ is also multiply connected component of $ F(f) $ for each $ f \in S $ and by lemma \ref{lu}, for each $ f \in S $,  $ f^{n}(z) \to \infty $ locally uniformly on  $  U $ and also that $ f^{n}(z) \to \infty $ on $ \partial U $. It follows by normality (that is, by theorem \ref{ad1}) that every sequence in $ S $ has a subsequence which diverges to $ \infty $ locally uniformly on $ U $ and $ \partial U $. This proves that $ f^{n}(z) \to \infty $ for all $ z \in U $ and $ z \in \partial U $ for each $ f \in S $ and hence by theorem \ref{1c} (3) , $ U \subset I(S) $. Since $ \partial U \subset J(f) $ for all $ f \in S $. By theorem \ref{1c} (2), $ \partial U \subset J(S) $. This proves that $I(S)\cap J(S) \neq \emptyset $. 

\end{proof}

The one of the most important result of classical complex dynamics is either $ J(f) =\mathbb{C} \; \text{or}\; \mathbb{C}_{\infty} $ or $ J(f) $ has empty interior for any holomorphic map $ f $ on $\mathbb{C} \; \text{or}\; \mathbb{C}_{\infty} $ (see {\cite[Lemma 3]{ber1}}). There are lot of examples of  transcendental entire functions and rational functions whose Julia set is entire complex plane or extended complex plane. Note that while $ J(f) = \mathbb{C} \; \text{or} \; \mathbb{C}_{\infty}$ is possible for some holomorphic map $ f $, we always have $ F(f) \neq \mathbb{C}\; \text{or} \; \mathbb{C}_{\infty} $. On the other hand, the analogous result is not hold in semigroup dynamics. 
Hinkkanen and Martin {\cite[Example-1]{hin}} provided the following example that shows that Julia set of a rational semigroup $ S $ may have non-empty interior even if $ J(S) \neq \mathbb{C}_{\infty} $. 
\begin{exm}\label{efi}
Rational semigroup $ S =\langle z^{2}, z^{2}/a \rangle $, where $ a \in \mathbb{C}, |a| >1 $ has Fatou set $ F(S) = \{z : |z| <1\; \text{or}\; |z| > |a| \}  $ and Julia set $ J(S)  = \{z : 1 \leq  |z| \leq  |a| \} $.
\end{exm}

Let $ U $ be a component of Fatou set $ F(f) $. Then $ f(U) $ is contained in  some component $ V $ of $ F(f) $. Note that if $ f $ is rational map then $ V =f(V) $. If $ f $ is transcendental, then it is possible that $ V \neq f(U) $.  Let us recall the following result of Bergweiler and Rohde \cite{ber1} of classical complex dynamics.
\begin{theorem} \label{zhi}
If $ f $ is entire, then $ V-f(U) $ contains at  most one point which an asymptotic value of $ f $.
\end{theorem}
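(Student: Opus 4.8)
The plan is to reduce the statement to a function-theoretic fact about components of preimages, and then to control the failure of surjectivity of $f\colon U\to V$ by lifting paths.

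First I would show that $U$ is in fact a whole connected component of $f^{-1}(V)$. Since $f(U)\subseteq V$ we have $U\subseteq f^{-1}(V)$, so it suffices to check that $\partial U$ does not meet $f^{-1}(V)$. If $\zeta\in\partial U$ then $\zeta\in J(f)$, and by complete invariance of the Julia set $f(\zeta)\in J(f)$; as $V\subseteq F(f)$ this gives $f(\zeta)\notin V$, i.e.\ $\zeta\notin f^{-1}(V)$. Hence $\partial U\cap f^{-1}(V)=\emptyset$, so $U$ is simultaneously open and relatively closed in $f^{-1}(V)$ and, being connected, is exactly one component of $f^{-1}(V)$. This is the structural fact that makes the finite boundary of $U$ invisible to the target $V$, and it is what drives everything else.

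Next I would analyse the omitted values through path-lifting. Fix $z_{0}\in U$ and $w_{0}=f(z_{0})$, and for $w\in V$ join $w_{0}$ to $w$ by a path $\sigma$ in $V$ and lift it under $f$, starting at $z_{0}$. Branch points of $f$ are no obstruction to continuing a lift, so the only way the lift $\tilde\sigma$ can fail to reach a point over $w$ is by leaving every compact subset of $U$ at some parameter $t^{*}$. The lift cannot accumulate at a finite point $\zeta\in\partial U$, for then continuity would give $f(\zeta)=\sigma(t^{*})\in V\subseteq F(f)$, contradicting $f(\zeta)\in J(f)$. Therefore $\tilde\sigma(t)\to\infty$ as $t\to t^{*}$, and $\sigma(t^{*})=\lim f(\tilde\sigma(t))$ is an asymptotic value of $f$ lying in $V$. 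Thus the only obstruction to $f|_{U}$ being onto is the presence of such asymptotic values on $\partial f(U)\cap V$; away from them $f\colon U\to V$ behaves like a (branched) covering. (Note that if $U$ is bounded this forces $f(U)=V$, consistent with the claim.)

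It remains to prove that $V\setminus f(U)$ contains \emph{at most one} point, and this is the step I expect to be the real obstacle. Writing $A=V\setminus f(U)$ — a relatively closed set consisting, by the previous paragraph, of asymptotic values — the plan is to regard $f\colon U\to V\setminus A$ as a covering map in which each point of $A$ is a logarithmic singularity of $f^{-1}$: over a small punctured disc about such a point the corresponding end of $U$ is an unbounded tract on which $f$ acts like an exponential covering. One must then rule out two such tracts in the single component $U$ lying over two distinct points of the single component $V$. Here the hypothesis that $U$ and $V$ are Fatou components is essential: the normality of $\{f^{n}\}$ forces the forward orbits of the points of $A$ to remain in $F(f)$, and feeding this back into the covering structure — for instance by lifting $f$ through the universal cover of $V\setminus A$, which is hyperbolic once $A$ has two points, to obtain a bounded holomorphic map and confronting it with the two escaping tracts — is meant to yield a contradiction. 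Making this confrontation precise is the delicate point, and it is exactly where the argument of Bergweiler and Rohde is needed. Granting it, one gets $|V\setminus f(U)|\le 1$; and when the exceptional point is present it lies on $\partial f(U)$ (since then $f(U)=V$ minus that point, which is dense), so the lifting of the second paragraph identifies it as the asserted asymptotic value.
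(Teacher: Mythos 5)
A preliminary remark: the paper you are being compared against offers no proof of this statement at all; it is quoted as a known theorem of Bergweiler and Rohde, so your attempt has to stand on its own. Its first two paragraphs do stand: the argument that $U$ is a full component of $f^{-1}(V)$ (via $\partial U\subset J(f)$ and complete invariance of the Julia set) is correct, and so is the path-lifting argument showing that a lift which fails at time $t^{*}$ must tend to $\infty$, whence $\sigma(t^{*})$ is an asymptotic value of $f$ along a path in $U$. Note, though, that this only identifies the points of $A=V\setminus f(U)$ lying on $\partial f(U)$ as asymptotic values; a priori $A$ could have interior relative to $V$, so your assertion that $A$ ``consists of asymptotic values'' is not yet available at that stage (it becomes true only after the cardinality bound is proved).

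The genuine gap is that the cardinality bound $|V\setminus f(U)|\leq 1$, which is the entire content of the theorem, is never proved: your third paragraph sketches a plan and then says ``Granting it,'' explicitly deferring to Bergweiler and Rohde for precisely the step under examination, so what you have is a correct reduction plus an unproven core. Moreover, the plan as sketched cannot work, because it invokes nothing that distinguishes two omitted points from one. Hyperbolicity is not the distinction: the statement is vacuous for polynomials (rational maps satisfy $f(U)=V$), and for transcendental entire $f$ the Julia set is an infinite perfect set, so $V$ is already hyperbolic and so is $V\setminus A$ when $A$ is a single point; your phrase ``hyperbolic once $A$ has two points'' would matter only if $V=\mathbb{C}$, which cannot occur. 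Likewise, the configuration ``bounded lift confronted with an escaping tract over an omitted asymptotic value lying in $F(f)$'' yields no contradiction, because that configuration genuinely occurs: for $f(z)=\lambda e^{z}$ with $0<\lambda<1/e$ one has $U=V=F(f)$ and $V\setminus f(U)=\{0\}$, with a logarithmic tract in $U$ over $0$. Any argument that applies verbatim to a single tract is therefore doomed; a correct proof must exploit something available only when two tracts over two distinct points of $A$ coexist inside the one component $U$, and that input --- the actual substance of Bergweiler and Rohde's two-page argument --- is exactly what is missing. (There is also a technical hole in the plan itself: $f\colon U\to V\setminus A$ lifts through the universal cover of $V\setminus A$ only if the induced map on fundamental groups is trivial, which can fail when a loop in $U$ surrounds preimages of a point of $A$ lying in a complementary component of $U$.)
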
 
 The following example of Huang Zhigang {\cite[Example 2]{huang}} shows that above result (theorem \ref{zhi}) can not be preserved for general semigroup dynamics. This is a contrast between classical complex dynamics and semigroup dynamics.

\begin{exm}
Let $ S =\langle z^{n}, az^{n} \rangle $, where $ n> 2 $ and $ |a| > 1 $. The Fatou set $ F(S) $ contains following components
$$
U = \Bigg\{\sqrt [n]{\dfrac{1}{|a|}}< |z| < \sqrt[n]{\sqrt [n-1]{\dfrac{1}{|a|}}} \Bigg\} \;\; \text{and} \;\;  V =\{|z| > 1 \}. 
$$
For a function $ f(z) = a z^{n} $ in semigroup $ S $, $ f(U) \subset V $ and $ V - f(U) $ is an unbounded domain.
\end{exm}
\begin{dfn}
Let $ S $ be a holomorphic semigroup. We define the backward orbit of any $ z \in \mathbb{C} $ (or $ \mathbb{C}_{\infty} $) by
$$
O^{-}(z) = \{w\in \mathbb{C}_{\infty}: \text{there exists} \; f \in S \; \text{such that} \; f(w) =z \}
$$
and the exceptional set of $ S $ is defined by
$$
E(S) = \{z \in \mathbb{C}_{_{\infty}}: O^{-}(z) \; \text{is finite}\}
$$
\end{dfn}
Note that if $ S $ finitely generated rational semigroup, then $ E(S) \subset F(S) $, otherwise we can not assert it. For example ({\cite[example 1]{huang}}), semigroup $ S =\langle f_{m} \rangle $, where $ f_{m}(z) =a^{m}z^{n}, m\in \mathbb{N}, n\geq 2 $ and $ |a| > 1 $,  is an infinitely generated polynomial semigroup. Then, $ E(S) =\{0, \infty \} $. It is easy to see that 0 is a limit point of $ J(f_{m}) = \{|z| = |a|^{\frac{-m}{n-1}}\} $, and hence $ 0 \in J(S) $.
So, in the case of finitely generated rational semigroup $ S $, we always have $ E(S) \subset F(S) \subset F(f) $ for any $ f \in S $. Hence $ E(S) $ contains at most two points. However, if   $ S $ finitely generated transcendental semigroup, then we can not assert $ E(S) \subset F(S) $ in general because for a transcendental function, it is difficult to determine whether Fatou exceptional value belongs Fatou set or Julia set. For example, 0 is the Fatou exceptional value of $ f(z) =e^{\lambda z} $. It is known in classical complex dynamics that $ 0 \in J(f) $ if $ \lambda > 1/e $ and $ 0 \in F(f) $ if $ \lambda < 1/e $. Poon and Yang \cite{poo1} gave the following characterization whether a Fatou exceptional value belongs to Fatou set or Julia set.
\begin{theorem}
Let $ f $ is transcendental entire function. If $ F(f) $ has no unbounded component, then Fatou exceptional value always belongs to Julia set.
\end{theorem}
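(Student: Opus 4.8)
The plan is to argue by contradiction, pulling back a small disc around the exceptional value and combining complete invariance of the Fatou set with the minimum principle for harmonic functions. First I would pin down the nature of the exceptional value: if $a \in \mathbb{C}$ is a finite Fatou exceptional value of the transcendental entire function $f$, then its backward orbit $O^{-}(a)$ is finite. Since a transcendental entire function assumes every value infinitely often with at most one exception (Picard), the set $f^{-1}(a)$ is infinite unless $a$ is the omitted (Picard) value; as $f^{-1}(a) \subset O^{-}(a)$ must be finite, $a$ must be that omitted value, and in fact $O^{-}(a) = \{a\}$ with $f^{-1}(a) = \emptyset$. In particular $f(z) \neq a$ for every $z \in \mathbb{C}$.

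Next, suppose for contradiction that $a \in F(f)$, and let $U$ be the Fatou component containing $a$. I would choose $\epsilon > 0$ so small that the disc $D(a,\epsilon) \subset U \subset F(f)$, and set $W = f^{-1}(D(a,\epsilon)) = \{z \in \mathbb{C} : |f(z) - a| < \epsilon\}$. Because $f^{-1}(F(f)) = F(f)$ by complete invariance of the Fatou set, we have $W \subset F(f)$. The set $W$ is non-empty, since by the big Picard theorem $f$ takes values arbitrarily close to $a$ near its essential singularity at $\infty$. The crux is then to show that $W$ has an unbounded component, because any connected subset of $F(f)$ lies in a single Fatou component, so an unbounded component of $W$ forces $F(f)$ to possess an unbounded component, contradicting the hypothesis and thereby proving $a \in J(f)$.

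To produce an unbounded component of $W$, I would study the function $u(z) = \log|f(z) - a|$. Since $a$ is omitted, $f - a$ is zero-free, so $u$ is harmonic on all of $\mathbb{C}$, and it is non-constant (otherwise $|f-a|$, hence $f$, would be constant). Now $W = \{z : u(z) < \log \epsilon\}$ is exactly a sub-level set of $u$, and I claim it has no bounded component. Indeed, if $C$ were a bounded component of $W$, then $u \equiv \log \epsilon$ on $\partial C$ while $u < \log \epsilon$ throughout $C$; but the minimum principle for the harmonic function $u$ on the bounded domain $C$ gives $\min_{\overline{C}} u = \min_{\partial C} u = \log \epsilon$, contradicting $u < \log \epsilon$ inside $C$. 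Hence every component of the non-empty set $W$ is unbounded, which completes the contradiction.

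The main obstacle, and the step deserving the most care, is the reduction ``finite exceptional value $\Rightarrow$ omitted value'': one must invoke Picard's theorem correctly to exclude a finite but nonempty preimage and to guarantee that the pulled-back set $W$ is non-empty through the behaviour of $f$ near its essential singularity. Once that is secured, the minimum-principle argument showing that sub-level sets of $\log|f-a|$ admit no bounded component is short, and the contradiction with the absence of unbounded Fatou components is immediate.
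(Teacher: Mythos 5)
The paper itself gives no proof of this theorem; it is quoted from a Poon--Yang preprint, so your argument can only be judged on its own merits. Your overall strategy (pull back a small disc about the exceptional value, use complete invariance, and kill bounded components of the sub-level set of $\log|f-a|$ by the minimum principle) is sound, but there is a genuine error in your first reduction. It is \emph{not} true that a finite Fatou exceptional value of a transcendental entire function must be omitted. Picard's theorem says every value is assumed infinitely often with at most one exception, and that exceptional value may be assumed finitely often rather than never. The correct conclusion from finiteness of $O^{-}(a)$ is only that $f^{-1}(a)\subseteq\{a\}$: either $a$ is omitted, or $f^{-1}(a)=\{a\}$, i.e.\ $a$ is a fixed point which is its own sole preimage. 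The second case really occurs and is compatible with your contradiction hypothesis $a\in F(f)$: for $f(z)=\lambda z e^{z}$ with $0<|\lambda|<1$, the value $a=0$ satisfies $f^{-1}(0)=\{0\}$, $O^{-}(0)=\{0\}$, and $0$ is an attracting fixed point, hence lies in $F(f)$. In this case $f-a$ is not zero-free, $u=\log|f-a|$ is only subharmonic (it equals $-\infty$ at $a$), and your minimum-principle argument breaks down precisely on the component of $W=\{|f-a|<\epsilon\}$ containing $a$, which can perfectly well be bounded.

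The good news is that the gap is repairable with the machinery you already set up. In the case $f^{-1}(a)=\{a\}$, the function $u=\log|f-a|$ is harmonic on $\mathbb{C}\setminus\{a\}$, so your minimum-principle argument still shows that every bounded component of $W$ must contain the point $a$; since components are disjoint, at most one component of $W$ is bounded. By the great Picard theorem $f$ assumes some value in $D(a,\epsilon)$ at points $z_n\to\infty$, and these points cannot all lie in a single bounded component, so $W$ still has an unbounded component, and your contradiction with the absence of unbounded Fatou components goes through. You should restate the reduction as the dichotomy $f^{-1}(a)=\emptyset$ or $f^{-1}(a)=\{a\}$ and run the two cases separately; as written, the claim ``exceptional $\Rightarrow$ omitted, with $O^{-}(a)=\{a\}$ and $f^{-1}(a)=\emptyset$'' is false and the proof is incomplete.
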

So, in the case of finitely generated transcendental semigroup $ S $, if $ E(f) \subset F(f)  $ for all $ f \in S $, then we can say $ E(S) \subset F(S) \subset F(f) $ for any $ f \in S $. Hence $ E(S) $ contains at most one point. 
This fact is a generalization of classical complex dynamics to semigroup dynamics and so it is a nice connection between these two types of dynamics. 
Huang Zhigang {\cite[Proposition 1]{huang}} proved the following result which also shows a connection between classical complex dynamics and semigroup dynamics.
\begin{theorem}
If $ z\notin E(S) $, then $ J(S) \subseteq \overline{O^{-}(z)} $. 
\end{theorem}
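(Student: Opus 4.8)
The plan is to adapt the classical ``blow-up'' property of the Julia set, whose proof rests on Montel's theorem together with the forward invariance of the semigroup. Fix an arbitrary point $ w \in J(S) $ and an arbitrary neighbourhood $ U $ of $ w $; it suffices to produce a point of $ O^{-}(z) $ inside $ U $, for then every neighbourhood of $ w $ meets $ O^{-}(z) $ and hence $ w \in \overline{O^{-}(z)} $. Since $ w \in J(S) $, the family $ S $ fails to be normal on $ U $, so by Montel's theorem the set of values omitted by \emph{all} members of $ S $ on $ U $ can contain at most two points; equivalently, writing $ W = \bigcup_{f \in S} f(U) $, the complement $ \mathbb{C}_{\infty} \setminus W $ has at most two elements.

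The crux is to show that every point omitted by $ W $ is exceptional, i.e. $ \mathbb{C}_{\infty}\setminus W \subseteq E(S) $. First I would check that $ W $ is forward invariant: for any $ g \in S $ one has $ g(W) = \bigcup_{f \in S}(g\circ f)(U) $, and since $ g\circ f \in S $ this is contained in $ W $. Consequently $ \mathbb{C}_{\infty}\setminus W $ is backward invariant, because if $ g(x)\in \mathbb{C}_{\infty}\setminus W $ while $ x \in W $ then $ g(x)\in g(W)\subseteq W $, a contradiction. Hence for any omitted point $ y $ the whole backward orbit satisfies $ O^{-}(y)\subseteq \mathbb{C}_{\infty}\setminus W $, a set of at most two points; thus $ O^{-}(y) $ is finite and $ y \in E(S) $.

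Combining these facts finishes the argument. Because $ z \notin E(S) $ by hypothesis, $ z $ cannot be one of the omitted points, so $ z \in W $; that is, $ z = f(x) $ for some $ f \in S $ and some $ x \in U $, whence $ x \in O^{-}(z)\cap U $. As $ U $ was an arbitrary neighbourhood of $ w $ and $ w $ an arbitrary point of $ J(S) $, we conclude $ J(S)\subseteq \overline{O^{-}(z)} $.

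I expect the main obstacle to be the invariance step, and in particular verifying that Montel's theorem yields ``at most two omitted points'' uniformly for both rational and transcendental semigroups. In the transcendental case one must account for $ \infty $, which every entire map omits on $ U $; this is harmless since $ O^{-}(\infty)=\emptyset $ forces $ \infty \in E(S) $, so $ \infty $ is never the excluded point $ z $. A secondary point to be careful about is that the backward orbit $ O^{-}(y) $ here is taken over all of $ S $ at once (not along the iterates of a single map), so the semigroup closure $ g\circ f \in S $ is precisely what makes the forward and backward invariance go through.
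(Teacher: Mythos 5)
The paper itself offers no proof of this theorem --- it merely cites Huang Zhigang's Proposition 1 --- so your attempt can only be measured against the standard argument in the literature, and it matches it: this is exactly the classical Montel blow-up proof adapted to semigroups. Your reasoning is correct at every step: non-normality of $S$ on any neighbourhood $U$ of $w \in J(S)$ forces $W=\bigcup_{f\in S}f(U)$ to omit at most two points of $\mathbb{C}_{\infty}$; the semigroup closure $g\circ f\in S$ gives forward invariance $g(W)\subseteq W$, hence $O^{-}(y)\subseteq \mathbb{C}_{\infty}\setminus W$ for every omitted point $y$, so omitted points lie in $E(S)$; thus $z\notin E(S)$ yields $z\in W$ and $O^{-}(z)\cap U\neq\emptyset$, and your side remark that $\infty\in E(S)$ automatically for entire semigroups (since $O^{-}(\infty)=\emptyset$) correctly disposes of the transcendental case.
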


\end{document}